\documentclass{article}

\usepackage{amsmath,amssymb,amsthm}
\usepackage{hyperref}
\usepackage{color}
\usepackage{tikz-cd}
\usepackage{mathrsfs}
\newtheorem{dfn}{Definition}
\newtheorem{thm}{Theorem}

\newtheorem{rem}{Remark}
\newtheorem{lem}{Lemma}
\newtheorem{prop}{Proposition}

\newcommand{\cS}{\mathcal{S}}
\newcommand{\cW}{\mathcal{W}}

\begin{document}



\title{Wishart Matrices and Quantum Geometry: Foundations and Applications in Quantum Information}

\author{No\'emie C. Combe}

\date{}
	\maketitle %

\begin{abstract}

We present a unified framework for the study of Wishart matrices \(W_p(n,\Sigma)\), which generalize the chi–squared distribution to matrix–variate settings and model the covariance structure of multivariate Gaussian data.  After recalling their defining properties—additivity under independent summation \(W_1 + W_2 \sim W_p(n_1+n_2,\Sigma)\), equivariance under linear maps \(A W A^T \sim W_q(n,A\Sigma A^T)\), and their role as sample covariance matrices—we embed the positive–definite cone \(S_p^+\) within Monge–Ampère geometry.  Here \(S_p^+\) acquires a Hessian manifold structure with affine–invariant metric and volume form \(\omega = \det(\Sigma)^{-\frac{p+1}2}\,d\Sigma\), under which the Wishart density acts as a soliton of natural geometric flows.  

We then show that the collection of Wishart distributions forms a symmetric monoidal category \(\mathcal W\), where objects are \(W_p(n,\Sigma)\) and whose morphisms are linear maps \(A\colon\mathbb R^p\to\mathbb R^q\).  The tensor product encodes block–diagonal coupling, with braiding given by block permutation, and axioms enforcing Monge–Ampère functoriality, additivity, and convex duality via the Legendre transform.  

  Applications to quantum error correction are discussed: Wishart laws model correlated noise, Wasserstein geodesics optimize error–mitigation cost, tensor structure captures independent error channels, and Legendre duality underpins entropy–driven decoding. 
\end{abstract}

\medskip 
{\bf keywords} 
Wishart Matrices, Quantum Information, Monge-Amp\`ere equation, Symmetric monoidal category, Wasserstein metrics, Quantum Error Correction.

\section{Introduction} 

Matrix theory plays a crucial role in quantum information science, providing a fundamental framework for describing quantum states, transformations, and statistical properties of quantum systems \cite{CoMaMa22B,CoNech}. Among the various classes of matrices, Wishart matrices \cite{W} arise naturally in statistical and quantum settings, particularly in the study of random density matrices and quantum entanglement \cite{B1,B2,B3}. They also appear in a more classical setting \cite{A0,L,LM1,LM2} and play an important role in semi definite programming \cite{C}.

\smallskip 

Wishart matrices are essential tools in the statistical analysis of quantum states, as they model the behavior of reduced density matrices obtained from larger composite systems. These matrices help characterize entanglement properties, shedding light on the spectral distribution and entanglement entropy of quantum subsystems \cite{Z}. Their role extends to the study of quantum state discrimination, quantum error correction, and information-theoretic aspects of quantum computation \cite{HLW06,H09} as well as \cite{MS,Z}.

\smallskip 

One key motivation for studying Wishart matrices in quantum information lies in their deep connection to quantum geometry \cite{C1} and deep algebraic structures present within. The geometric and algebraic structures underlying these matrix spaces reveal intricate relationships between statistical manifolds \cite{C2,C,CCN}, quantum information metrics \cite{A,Col06,Cu}, and modular theory \cite{C1}. By exploring properties of these matrix representations, we gain insights into entanglement structures, separability criteria, and the mathematical foundations of quantum state spaces. In this paper we study in detail the intrinsic differential‐geometric structure of the manifold of Wishart matrices 
\(\mathcal{W}_p(n,\Sigma)\cong S_p^+\) and its embedding into a richer algebraic framework.  Concretely, we equip \(S_p^+\) with the affine‐invariant Riemannian metric
\[
g_\Sigma(U,V)\;=\;\mathrm{Tr}\bigl(\Sigma^{-1}U\,\Sigma^{-1}V\bigr),
\]
under which \(S_p^+\) is a globally symmetric space of nonpositive curvature. 
The log‐determinant potential endows \(S_p^+\) with a dually flat Hessian structure, whose dual affine coordinate system arises from the Monge–Ampère volume form \(\omega=\det(\Sigma)^{-\frac{p+1}2}\,d\Sigma\).  We prove that the Wishart density
\[
f({\bf W})\;\propto\;|{\bf W}|^{\frac{n-p-1}2}\exp\bigl(-\tfrac12\mathrm{Tr}(\Sigma^{-1}{\bf W})\bigr)
\]
is the unique soliton of the natural gradient flow associated to this Hessian metric.

On the algebraic side, we construct a symmetric monoidal category \(\mathcal{W}\) whose objects are the families \(W_p(n,\Sigma)\) and whose morphisms are linear maps \(A\colon\mathbb{R}^p\to\mathbb{R}^q\) satisfying
\[
W_p(n,\Sigma)\;\longmapsto\;A\,W_p(n,\Sigma)\,A^T\;\cong\;W_q\bigl(n\,,\,A\,\Sigma\,A^T\bigr).
\]
The tensor product corresponds to block‐diagonal coupling of covariance parameters, the braiding implements permutation of blocks, and the unit object is \(W_0(0,\emptyset)\).  We verify that this monoidal structure is compatible both with the affine‐invariant Riemannian product
\[
g_{\Sigma_1\oplus\Sigma_2}
=\;g_{\Sigma_1}\oplus g_{\Sigma_2},
\]
and with Legendre duality under the log‐determinant potential.  Functoriality of Monge–Ampère volume and additivity of intrinsic volumes appear as categorical coherence conditions.  

Together, these results bridge the Riemannian geometry of positive‐definite matrices, the theory of matrix‐variate distributions, and categorical algebra, laying the groundwork for applications ranging from optimal transport on covariance manifolds to tensor‐categorical models of quantum noise. We discuss applications of our findings to quantum error correction.

\subsection{Plan of the paper}
In this paper we proceed as follows.
\begin{itemize}
\item Section 2: we recall properties of Wishart Matrices and Wishart probability distributions.  We outline algebraic structures underlying Wishart distributions such as the existence of a non-unital commutative semigroup under addition Lem.~\ref{L:1}. In Thm.~\ref{T:1} we show that the family of Wishart distributions forms a symmetric monoidal category. 
\item  Section 3 is devoted to considerations of a more geometric nature. In particular, the cones of symmetric positive definite matrices form a Monge--Amp\`ere domain. We develop this aspect in relation to Wasserstein metrics and entropic regularization. These results are then combined with the symmetric monoidal category structure, from which follows among others the existence of Monge--Amp\`ere functoriality in Prop.~\ref{P:1} and Prop.~\ref{P:2}.
\item Section 4 discusses how those results can be applied for Quantum information and in particular quantum error correction. Wishart ensembles, provides statistical
models to study how noise affects quantum states. The categorical and Monge-Amp\`ere geometric structures of Wishart
matrices suggest novel frameworks for quantum error correction by bridging
matrix-valued statistics, optimal transport, and quantum information theory. Spatially or temporally correlated errors (such as in bosonic
or spin systems) could be modeled using Wishart-distributed noise, where $\Sigma$ encodes error correlations.
\end{itemize}
\section{Wishart Matrices: Definition and Properties}
Random matrix theory (RMT) offers a powerful framework for studying the statistical behavior of matrices with random entries, particularly in high-dimensional settings. Among the foundational ensembles in RMT, Wishart matrices occupy a central role, especially in applications involving multivariate statistics, wireless communications, finance, and machine learning. 
Wishart matrices are crucial in multivariate statistics and quantum information theory because:
\begin{itemize}
    \item they generalize sample covariance matrices, which measure correlations between variables.
   \item Their eigenvalue distributions encode statistical properties of complex systems.
   \item  They appear in physics, particularly in quantum chaos and statistical mechanics.
\end{itemize}
These matrices arise naturally when considering the sample covariance of multivariate Gaussian data, and their eigenvalue distributions capture fundamental structural properties of correlated systems.

\subsection{Construction of Wishart matrices from random matrix theory}
In this subsection, we explore how Wishart matrices are constructed within the paradigm of random matrix theory. Rather than introducing them purely from a statistical perspective, we emphasize their emergence through the perspective of matrix ensembles and the probabilistic structure underlying their formulation. 
\begin{dfn} A Wishart matrix ${\bf W}$ is a random matrix of the form:
 \[{\bf W}=XX^T\]
 where 
 \begin{itemize}
     \item $X$ is a $n\times p$ matrix and $X^T$ its (conjugate) transpose.
     \item The entries of $X$ are  independent and identically distributed random variables,
     usually from a Gaussian distribution. The matrix $X$ is formed from  $n$ independent and identically distributed random vectors $X_1,\cdots, X_n$, where 
     $X_i\sim\mathcal{N}_p(0,\Sigma)$ ($p-$dimensional distribution with mean 0 and covariance $\Sigma$).
    \end{itemize}
 \end{dfn}
 These matrices are always {\it positive semi-definite} (PSD) meaning all their eigenvalues are non-negative and they depend on two parameters. 
 The first parameter is the degree of freedom ($n$) which corresponds to the number of independent normal vectors used to construct ${\bf W}$. The second parameter is the scale matrix $\Sigma$, which is a $p\times p$ symmetric positive definite matrix.

For instance, suppose we have a $2\times 3$ random matrix $X$ defined by

\[X=\begin{pmatrix}
    x_{11}& x_{12} & x_{13}\\
    x_{21}& x_{22} & x_{23}
\end{pmatrix},\] where each $x_{ij}$ is a random number, drawn from a Gaussian distribution.

\, 

Then the Wishart matrix is defined as:
  \[{\bf W}=XX^T=\begin{pmatrix}
    x_{11}^2+x_{12}^2+x_{13}^2& x_{11}x_{12}+x_{12}x_{22}+x_{13}x_{23}\\
    x_{11}x_{21}+x_{12}x_{22}+x_{13}x_{33}& x_{21}^2+x_{22}^2  x_{23}^2
\end{pmatrix}\]  

We can check that {\bf W} is a $2\times 2$ positive semi-definite matrix.

\subsection{Wishart probability distribution}
\begin{itemize}
    \item Let $\mathbf {W} $ be an $n \times n$ symmetric matrix of random variables, positive semi-definite. 

\item Let $\Sigma$ be a (fixed) symmetric positive definite matrix of size $p \times p$.
\end{itemize}
If $\mathbf{W}$ has a Wishart distribution with $n$ degrees of freedom then it has the following probability density function

\[{f_{\mathbf{W} }={\frac {1}{2^{np/2}\left|{\Sigma}\right|^{n/2}\Gamma _{p}\left({\frac {n}{2}}\right)}}{\left|\mathbf {W} \right|}^{(n-p-1)/2}e^{-{\frac {1}{2}}\operatorname {tr} (\Sigma^{-1}\mathbf {W} )}}\]
where ${\left|{\mathbf {W} }\right|}$ is the determinant of ${\mathbf{W}}$ and $ \Gamma_p$ is the multivariate gamma function defined as

\[{ \Gamma _{p}\left({\frac {n}{2}}\right)=\pi^{p(p-1)/4}\prod _{j=1}^{p}\Gamma \left({\frac {n}{2}}-{\frac {j-1}{2}}\right).}\]    

We denote it $\mathbf{W}\sim \mathcal{W}_{p}(n,\Sigma)$.

\subsubsection{Properties of the Wishart distribution}
We investigate the algebraic properties which are present within the Wishart distribution framework. In particular, given $\mathbf{W}_1\sim \mathcal{W}_{p}(n_1,\Sigma)$ and $\mathbf{W}_2\sim \mathcal{W}_{p}(n_2,\Sigma)$ (supposed independent) we have: 
\begin{equation}\label{E:1}
\mathbf{W}_1+\mathbf{W}_2 \sim \mathcal{W}_{p}(n_1+n_2,\Sigma).
\end{equation}
This property implies {\it closure} under addition for matrices sharing the same scale parameter $\Sigma$.
More precisely,

\begin{lem}\label{L:1} 
Consider the set of Wishart-distributed $ {\bf X}_{\Sigma}$ matrices with fixed $\Sigma$, parametrized by degrees of freedom $n\geq p$ and endowed with the matrix addition property. Then, $({\bf X}_{\Sigma},+)$ forms a non-unital commutative semigroup under addition. 
\end{lem}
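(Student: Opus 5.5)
The proof will be carried out at the level of the family of laws $\{\mathcal{W}_p(n,\Sigma): n\geq p\}$ for the fixed scale matrix $\Sigma$. The element of ${\bf X}_\Sigma$ labelled by $n$ will be identified with (a representative random matrix of) $\mathcal{W}_p(n,\Sigma)$. The binary operation is addition of independent representatives. This identification is needed because the sum of two random matrices is only Wishart when the summands are independent. The first step is therefore to check that the operation is well defined on laws. Given $\mathbf{W}_1\sim\mathcal{W}_p(n_1,\Sigma)$ and $\mathbf{W}_2\sim\mathcal{W}_p(n_2,\Sigma)$ independent, I realize them as $\mathbf{W}_i=X_iX_i^T$, where $X_1$ is the $p\times n_1$ matrix whose columns are i.i.d. $\mathcal{N}_p(0,\Sigma)$ and $X_2$ is the analogous $p\times n_2$ matrix, independent of $X_1$. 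Concatenating columns gives $X=[X_1\,|\,X_2]$, a $p\times(n_1+n_2)$ matrix with i.i.d. $\mathcal{N}_p(0,\Sigma)$ columns, and
\[
XX^T = X_1X_1^T + X_2X_2^T = \mathbf{W}_1+\mathbf{W}_2 .
\]
By the definition, this sum is $\mathcal{W}_p(n_1+n_2,\Sigma)$. This recovers \eqref{E:1}. It also shows that the law of the sum depends only on $(n_1,n_2)$, so the operation is well defined. Closure is immediate, since $n_1+n_2\geq 2p\geq p$.

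The semigroup axioms then reduce to arithmetic on the degree-of-freedom label via the map $n\mapsto\mathcal{W}_p(n,\Sigma)$. Commutativity holds because matrix addition is commutative, or equivalently because $n_1+n_2=n_2+n_1$. For associativity, take three mutually independent representatives. Matrix addition is associative, and concatenating three column blocks gives the law $\mathcal{W}_p(n_1+n_2+n_3,\Sigma)$ however the sum is bracketed. One also needs that $\mathbf{W}_1+\mathbf{W}_2$ is independent of $\mathbf{W}_3$, which holds because it is a measurable function of $(X_1,X_2)$. Alternatively, the whole structure can be obtained through the characteristic function $\mathbb{E}\,e^{i\,\mathrm{tr}(\Theta\mathbf{W})}=\det(I-2i\Theta\Sigma)^{-n/2}$. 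This is multiplicative under independent sums, so the map $n\mapsto\mathcal{W}_p(n,\Sigma)$ is a semigroup homomorphism from $(\{n\geq p\},+)$ onto the set of laws. It is injective because distinct $n$ give distinct characteristic functions. Hence ${\bf X}_\Sigma$ is isomorphic to the additive semigroup $\{n\in\mathbb{N}: n\geq p\}$.

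The main point to handle carefully is that ${\bf X}_\Sigma$ has no identity. Under the isomorphism above, an identity would be some $e\geq p$ with $n+e=n$ for all $n$, forcing $e=0<p$. Equivalently, the only candidate neutral element is the point mass at the zero matrix, $\mathcal{W}_p(0,\Sigma)$. It is excluded from the family because it is degenerate: it has no density on $S_p^+$, and the density formula requires $n\geq p$ for ${\bf W}$ to be almost surely positive definite. For $p\geq1$ this gives a non-unital commutative semigroup. I would add a remark that if the index set is enlarged to include $n=0$ (and all $n\geq0$, allowing singular Wishart laws), the family becomes a monoid isomorphic to $(\mathbb{N},+)$. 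This shows that the non-unital conclusion depends entirely on the constraint $n\geq p$ in the hypothesis.
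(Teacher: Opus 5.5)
Your proof is correct and follows the same outline as the paper's: closure comes from the additivity property \eqref{E:1}, commutativity and associativity are inherited from matrix addition, and there is no identity because the only candidate, $\mathcal{W}_p(0,\Sigma)$, is excluded by $n\geq p\geq 1$. What you add is rigor the paper omits. You define the operation on laws via independent representatives, which matters because the set of random matrices itself is not closed under addition without independence. You prove \eqref{E:1} by concatenating Gaussian column blocks instead of assuming it. And you identify ${\bf X}_\Sigma$ with the additive semigroup $\{n\in\mathbb{N}: n\geq p\}$, which makes the non-unital conclusion immediate.
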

\begin{proof}
Let $\mathbf{W}_1\sim \mathcal{W}_{p}(n_1,\Sigma)$ and $\mathbf{W}_2\sim \mathcal{W}_{p}(n_2,\Sigma)$ (supposed independent). Then we get
$\mathbf{W}_1+\mathbf{W}_2 \sim \mathcal{W}_{p}(n_1+n_2,\Sigma)$. This implies that $\mathbf{W}_1,\mathbf{W}_2\in  {\bf X}_{\Sigma}$ then $\mathbf{W}_1+\mathbf{W}_2 \in {\bf X}_{\Sigma}$.

We can check that associativity is satisfied: 
\[\mathbf{W}_1+(\mathbf{W}_2+\mathbf{W}_3)=(\mathbf{W}_1+\mathbf{W}_2)+\mathbf{W}_3.\]
There is no identity element because $\mathbf{W}+0\sim \mathcal{W}_{p}(n,\Sigma)$ requires $0\sim \mathcal{W}_{p}(0,\Sigma)$, which is undefined since the degrees of freedom are $n\geq p\geq 1.$
Therefore, this forms a non-unital commutative semigroup under addition. 
\end{proof}
This has implications on the Wishart measures and, more precisely, we have a probabilistic closure under convolution property. 
\begin{lem}\label{L:2} 
The set of Wishart distributions $\mathcal{W}_p(n,\Sigma)$ endowed with the convolution operation forms a convolution semigroup over the space of positive definite matrices. 
\end{lem}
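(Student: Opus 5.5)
The plan is to move from the random-matrix statement of Lemma~\ref{L:1} to a statement about the laws themselves. For fixed $\Sigma$, write $\mu_n$ for the law of $\mathcal{W}_p(n,\Sigma)$ on the closed cone of positive semi-definite matrices; it is supported on the open cone $S_p^+$ once $n\geq p$. Convolution is defined by pushing forward along addition: $(\mu*\nu)(B)=\int \mu(B-\mathbf{W})\,d\nu(\mathbf{W})$. The core claim is
\[
\mu_{n_1}*\mu_{n_2}=\mu_{n_1+n_2}.
\]
Associativity and commutativity will then come for free, because convolution of measures on an abelian group is associative and commutative. Together these give the convolution-semigroup property, and it is non-unital in the range $n\geq p$, exactly as in Lemma~\ref{L:1}.

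To prove the core identity I will use Laplace transforms on the cone. For $\Theta$ symmetric with $\Sigma^{-1}+2\Theta$ positive definite, the transform is
\[
\mathcal{L}\mu_n(\Theta)=\mathbb{E}\bigl[e^{-\mathrm{tr}(\Theta\mathbf{W})}\bigr].
\]
From the representation $\mathbf{W}=\sum_{i=1}^n X_iX_i^T$ with $X_i\sim\mathcal N_p(0,\Sigma)$ i.i.d., I get
\[
\mathcal{L}\mu_n(\Theta)=\bigl(\mathbb{E}\,e^{-X^T\Theta X}\bigr)^n=\det(I+2\Sigma\Theta)^{-n/2},
\]
using the Gaussian integral $\int e^{-x^T(\Sigma^{-1}/2+\Theta)x}\,dx$. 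The same formula can be checked directly from the density $f_{\mathbf{W}}$. It uses the normalisation $\int_{S_p^+}|\mathbf{W}|^{(n-p-1)/2}e^{-\mathrm{tr}(A\mathbf{W})}\,d\mathbf{W}=\Gamma_p(n/2)|A|^{-n/2}$, taken with $A=\tfrac12\Sigma^{-1}+\Theta$, and this extends the formula to real $n>p-1$.

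The Laplace transform of a convolution is the product of the Laplace transforms. So
\[
\mathcal{L}(\mu_{n_1}*\mu_{n_2})=\det(I+2\Sigma\Theta)^{-(n_1+n_2)/2}=\mathcal{L}\mu_{n_1+n_2}.
\]
Uniqueness of the Laplace transform for finite measures on the cone, which holds because the transform is finite on an open set, then gives equality of the measures. This is equation~\eqref{E:1} restated at the level of distributions.

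I expect the main obstacle to be bookkeeping rather than analysis. The statement must identify the semigroup on the right space: the laws live on the closed PSD cone, and for $n\geq p$ they are concentrated on $S_p^+$, which is closed under addition. I also need to explain why there is no identity. The identity for convolution would be $\delta_0=\mu_0$, and $\delta_0$ is not an admissible Wishart law when $n\geq p\geq1$; in particular it has no density. Optionally I would remark that allowing real $n>p-1$ gives a continuous-parameter semigroup $n\mapsto\mu_n$, which is weakly continuous because $\det(I+2\Sigma\Theta)^{-n/2}$ is continuous in $n$.
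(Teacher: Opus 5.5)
Your proposal is correct. Its skeleton matches the paper's proof: the closure identity $\mathcal{W}_p(n_1,\Sigma)\star\mathcal{W}_p(n_2,\Sigma)=\mathcal{W}_p(n_1+n_2,\Sigma)$, associativity and commutativity inherited from convolution on an abelian group, and no identity because the only candidate is $\delta_0$.

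The difference is in the key step. The paper never proves the closure identity; it just says ``as previously'' and invokes \eqref{E:1}, which is itself stated without proof. You prove it: you compute the Laplace transform $\det(I+2\Sigma\Theta)^{-n/2}$, use multiplicativity under convolution, and conclude by uniqueness of the Laplace transform.

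For integer $n$ the Laplace machinery is more than you need. The representation $\mathbf{W}=\sum_{i=1}^n X_iX_i^T$, together with the fact that convolution of laws is the law of an independent sum, gives the identity in one line. That is what the paper's ``as previously'' implicitly relies on.

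Your density-based computation does buy real extra generality:
\begin{itemize}
\item it covers all real $n>p-1$, where no Gaussian representation exists;
\item it yields the continuous-parameter semigroup $n\mapsto\mu_n$;
\item it gives a rigorous reason no member of the family can be a unit, since a unit's transform would have to be identically $1$.
\end{itemize}
The paper's version is shorter but only as rigorous as the unproved \eqref{E:1}; yours is self-contained.

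One small wording point: the topological support of $\mu_n$ is the closed positive semi-definite cone. What you mean, and later say, is that $\mu_n$ is concentrated on $S_p^+$.
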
 
\begin{proof}
As previously, we have closure under the convolution operation: 
$$\mathcal{W}_p(n_1,\Sigma)\star\mathcal{W}_p(n_2,\Sigma)=\mathcal{W}_p(n_1+n_2,\Sigma).$$
The associativity and commutativity are inherited from the convolution operation. As previously there is non identity element since no distribution acts as a ``0" under convolution.
\end{proof}
\medskip 

 \subsection*{Symmetric Monoidal Structure}
In a similar vein as in \cite{C1}, we incorporate these objects within the formalism of symmetric monoidal categories. 
\begin{thm}\label{T:1} 
{\it The family of Wishart distributions $\{ \mathcal{W}_p(n,\Sigma)\}$ forms a symmetric monoidal category.}
\end{thm}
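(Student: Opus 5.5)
We make the statement precise by fixing the data of a category $\mathcal{W}$ and then checking the axioms of a symmetric monoidal category one by one. \textbf{Objects} are the laws $\mathcal{W}_p(n,\Sigma)$ with $p\ge 0$, $n\geq 0$ and $\Sigma\in S_p^+$. We admit the degenerate object $\mathcal{W}_0(0,\emptyset)$, the point mass on the unique $0\times 0$ matrix; Lemma~\ref{L:1} does not forbid this, since it concerns only fixed $\Sigma$ and $n\ge p\ge1$.

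\textbf{Morphisms} $\mathcal{W}_p(n,\Sigma)\to\mathcal{W}_q(n',\Sigma')$ are the linear maps $A\colon\mathbb{R}^p\to\mathbb{R}^q$ such that $n'=n$ and $A\Sigma A^T=\Sigma'$. By the equivariance property, this is exactly the condition under which the pushforward of the first law under ${\bf W}\mapsto A{\bf W}A^T$ is the second law. This property follows from writing ${\bf W}=XX^T$ with rows $X_i\sim\mathcal{N}_p(0,\Sigma)$, since then $AX_i\sim\mathcal{N}_q(0,A\Sigma A^T)$. Composition is matrix multiplication, which is well defined because $(BA)\Sigma(BA)^T=B(A\Sigma A^T)B^T$. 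Identities are $I_p$, and associativity and unitality are inherited from matrix multiplication.

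\textbf{Tensor product.} On objects we set
\[
\mathcal{W}_{p_1}(n,\Sigma_1)\otimes\mathcal{W}_{p_2}(n,\Sigma_2)=\mathcal{W}_{p_1+p_2}(n,\Sigma_1\oplus\Sigma_2),
\]
and on morphisms $A_1\otimes A_2=A_1\oplus A_2$ (block diagonal). Functoriality amounts to two facts: $(A_1\oplus A_2)(\Sigma_1\oplus\Sigma_2)(A_1\oplus A_2)^T=A_1\Sigma_1A_1^T\oplus A_2\Sigma_2A_2^T$, and $(B_1\oplus B_2)(A_1\oplus A_2)=B_1A_1\oplus B_2A_2$. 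Probabilistically, if $X^{(1)},X^{(2)}$ are independent Gaussian data matrices sharing the same $n$, then $[X^{(1)}\,|\,X^{(2)}]$ has rows distributed as $\mathcal{N}(0,\Sigma_1\oplus\Sigma_2)$. The associator is the identity permutation on $\mathbb{R}^{p_1+p_2+p_3}$, because $(\Sigma_1\oplus\Sigma_2)\oplus\Sigma_3=\Sigma_1\oplus(\Sigma_2\oplus\Sigma_3)$ strictly. The unit is $\mathcal{W}_0(0,\emptyset)$, and the unitors are the identities, so the pentagon and triangle axioms hold trivially; in this sense the structure is strict.

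\textbf{Braiding.} The braiding $\sigma$ is the block permutation matrix $P=\begin{pmatrix}0&I_{p_2}\\ I_{p_1}&0\end{pmatrix}$. It is a morphism because $P(\Sigma_1\oplus\Sigma_2)P^T=\Sigma_2\oplus\Sigma_1$. Naturality reduces to $P(A_1\oplus A_2)=(A_2\oplus A_1)P$. We have $\sigma\circ\sigma=\mathrm{id}$ because $P^2$ is the identity in the appropriate block sense. The hexagon identity holds because both sides equal the same block permutation of three blocks.

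The main obstacle is the degree-of-freedom bookkeeping. The tensor product above is defined only when $n_1=n_2$, and the unit requires $n=0$, which is incompatible with tensoring against objects with $n\ge p$. To repair this, we first try grading by $n$: the category splits as a disjoint union $\coprod_n\mathcal{W}^{(n)}$, each summand is symmetric monoidal as above, and each has its own unit $\mathcal{W}_0(n,\emptyset)$. The zero-dimensional Wishart law with $n$ degrees of freedom is again the point mass, so these units are all canonically the same degenerate object. We then either identify all these degenerate units, or state the theorem for each fixed $n$. We also drop the constraint $n\geq p$ and allow singular Wishart laws, supported on the PSD cone. This is necessary because morphisms $A$ of rank less than $q$ produce singular $A\Sigma A^T$, so $\Sigma$ must be allowed to range over PSD matrices, or else morphisms must be restricted to surjective $A$. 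We commit to allowing PSD $\Sigma$, since this keeps composition and the unit well defined.
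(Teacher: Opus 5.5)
\textbf{Gap: the tensor product is not total, so the proposal does not prove the theorem for the whole family.} Your checks inside a fixed number of degrees of freedom are correct. They are also more explicit than the paper's, which never verifies functoriality, naturality of the braiding, or the hexagon. With your rule $\mathcal{W}_{p_1}(n,\Sigma_1)\otimes\mathcal{W}_{p_2}(n,\Sigma_2)=\mathcal{W}_{p_1+p_2}(n,\Sigma_1\oplus\Sigma_2)$, however, $\otimes$ is only a partial operation. It is therefore not a bifunctor $\mathcal{W}\times\mathcal{W}\to\mathcal{W}$.

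Neither repair you offer closes this. Identifying the degenerate units still leaves $\mathcal{W}_{p_1}(n,\Sigma_1)\otimes\mathcal{W}_{p_2}(m,\Sigma_2)$ undefined for $n\neq m$. Restricting to fixed $n$ proves only that each slice $\mathcal{W}^{(n)}$ is symmetric monoidal, which is a weaker statement than the theorem.

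The missing idea is the paper's bookkeeping rule for degrees of freedom: $\mathcal{W}_{p_1}(n_1,\Sigma_1)\otimes\mathcal{W}_{p_2}(n_2,\Sigma_2)=\mathcal{W}_{p_1+p_2}(n_1+n_2,\Sigma_1\oplus\Sigma_2)$, with $A_1\otimes A_2=A_1\oplus A_2$ and unit $\mathcal{W}_0(0,\emptyset)$. Take objects to be labelled triples $(p,n,\Sigma)$; the $n$-coordinate then rides along in the commutative monoid $(\mathbb{N},+)$. Morphisms preserve $n$, $\otimes$ adds it, $n_i\ge p_i$ gives $n_1+n_2\ge p_1+p_2$, and the unit is strict. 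Every verification you already wrote goes through verbatim: functoriality of $\oplus$, strict associator and unitors, $P(\Sigma_1\oplus\Sigma_2)P^T=\Sigma_2\oplus\Sigma_1$, naturality, involutivity, and the hexagon. The ``main obstacle'' you identify is an artifact of insisting on $n_1=n_2$.

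\textbf{What the additive rule costs, and what your version gets right.} The additive rule is formal. No natural combination of independent $W_1\sim\mathcal{W}_{p_1}(n_1,\Sigma_1)$ and $W_2\sim\mathcal{W}_{p_2}(n_2,\Sigma_2)$ has law $\mathcal{W}_{p_1+p_2}(n_1+n_2,\Sigma_1\oplus\Sigma_2)$. For instance, $W_1\oplus W_2$ has vanishing off-diagonal blocks. That Wishart law instead has almost surely nonzero off-diagonal blocks and diagonal blocks with $n_1+n_2$ degrees of freedom. So your concatenation interpretation survives only on the equal-$n$ slices, and the paper's claim that $\otimes$ ``encodes the combination of independent Wishart matrices'' is not justified.

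Conversely, you correctly observe that a rank-deficient $A$ sends $\Sigma$ to a singular $A\Sigma A^T$, possibly with $q>n$. This is a real closure defect in the paper's category of positive-definite objects with $n\ge p$. Allowing PSD $\Sigma$ and dropping $n\ge p$, or restricting to surjective $A$, is needed in the paper's version too. If you allow PSD $\Sigma$, keep objects as labels rather than measures, since $\mathcal{W}_q(n,0)=\delta_0$ for every $n$. In such degenerate cases, $n'=n$ and $A\Sigma A^T=\Sigma'$ is sufficient for the pushforward law to be $\mathcal{W}_q(n',\Sigma')$, but not ``exactly'' equivalent to it.
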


\medskip

For the convenience of the reader, we recall that a symmetric monoidal category is. A \textit{symmetric monoidal category} is a tuple
\[
(\mathcal{C}, \otimes, I, \alpha, \lambda, \rho, \gamma)
\]
consisting of:

\begin{itemize}
    \item a category \(\mathcal{C}\),
    \item a bifunctor (the \textit{monoidal product})
    \[
    \otimes : \mathcal{C} \times \mathcal{C} \to \mathcal{C},
    \]
    \item a distinguished \textit{unit object} \(I \in \mathcal{C}\),
    \item natural isomorphisms (called \textit{coherence maps}):
    \begin{itemize}
        \item \textbf{Associator}
        \[
        \alpha_{A,B,C} : (A \otimes B) \otimes C \to A \otimes (B \otimes C),
        \]
        \item \textbf{Left unitor}
        \[
        \lambda_A : I \otimes A \to A,
        \]
        \item \textbf{Right unitor}
        \[
        \rho_A : A \otimes I \to A,
        \]
        \item \textbf{Braiding (symmetry)}
        \[
        \gamma_{A,B} : A \otimes B \to B \otimes A,
        \]
        satisfying \(\gamma_{B,A} \circ \gamma_{A,B} = \mathrm{id}_{A \otimes B}\).
    \end{itemize}
\end{itemize}

These data satisfy the following \textbf{coherence conditions}.
For all \(A, B, C, D \in \mathcal{C}\), the following diagram commutes:
\[
\begin{tikzcd}[column sep=huge, row sep=huge]
((A \otimes B) \otimes C) \otimes D \arrow[r, "\alpha_{A,B,C} \otimes \mathrm{id}_D"] \arrow[d, "\alpha_{A \otimes B,C,D}"'] & (A \otimes (B \otimes C)) \otimes D \arrow[r, "\alpha_{A,B \otimes C,D}"] & A \otimes ((B \otimes C) \otimes D) \arrow[d, "\mathrm{id}_A \otimes \alpha_{B,C,D}"] \\
(A \otimes B) \otimes (C \otimes D) \arrow[rr, "\alpha_{A,B,C \otimes D}"'] && A \otimes (B \otimes (C \otimes D)).
\end{tikzcd}
\]

For all \(A, B \in \mathcal{C}\), the following diagram commutes:
\[
\begin{tikzcd}
(A \otimes I) \otimes B \arrow[rr, "\alpha_{A,I,B}"] \arrow[dr, "\rho_A \otimes \mathrm{id}_B"'] && A \otimes (I \otimes B) \arrow[dl, "\mathrm{id}_A \otimes \lambda_B"] \\
& A \otimes B
\end{tikzcd}
\]

Concerning the symmetry axioms , we have that 

\begin{itemize}
    \item the braiding is involutive: \(\gamma_{B,A} \circ \gamma_{A,B} = \mathrm{id}_{A \otimes B}\);
    \item it is compatible with the unit:
    \[
    \gamma_{A,I} = \lambda_A^{-1} \circ \rho_A.
    \]
\end{itemize}

\medskip 

This enables us now to proceed with the proof of our statement. 
\begin{proof}
\smallskip 

{\it Objects} in this category are Wishart distributions, parametrized by three elements: \begin{itemize} 
\item the positive integer $p$ (the dimension),
\item positive integer $n$ (the degrees of freedom, such that $n\geq p$) 
\item $\Sigma$ a symmetric positive definite matrix (lying in the cone $\mathcal{S}_+^p$ of symmetric positive definite matrices of size $p\times p$). 
\end{itemize}
\[\textrm{Ob}(\mathcal{W})=\{\mathcal{W}_p(n,\Sigma)\, |\, p\in \mathbb{N}, n\geq p,\quad \Sigma\in \mathcal{S}_+^p\}.\]

\smallskip

{\it  Morphisms}  are linear transformation between Wishart distributions. For a linear map $A\in \mathbb{R}^{q\times p}$, we have 
\[A\mathbf{W}A^T\sim \mathcal{W}_{q}(n,A\Sigma A^T),\]
which induces the following morphism: 

\[A:\mathcal{W}_p(n,\Sigma)\to \mathcal{W}_q(n,A\Sigma A^T).\]

\smallskip

We discuss a few more properties such as the identity morphism and the composition. For $A=I_p$, the identity matrix:
\[I_p:\mathcal{W}_p(n,\Sigma)\to \mathcal{W}_p(n,\Sigma), \]
leaving the Wishart distribution unchanged.
The second property to consider is the composition operation. Given $A\in \mathbb{R}^{q\times p}$ and $B\in \mathbb{R}^{r\times q}$:
\[B\circ A:  \mathcal{W}_p(n,\Sigma)\to  \mathcal{W}_r(n,BA\Sigma A^TB^T),\]
which corresponds to the linear map $W\mapsto BA\Sigma A^TB^T$.

\smallskip 

The category is equipped with a symmetric monoidal structure that encodes the combination of independent Wishart matrices.

\smallskip

{ \it Tensor Product ($\otimes$):}

\smallskip 

\begin{itemize}
  \item On \textbf{Objects:} For \( W_{p_1}(n_1, \Sigma_1) \) and \( W_{p_2}(n_2, \Sigma_2) \), the tensor product is defined as:
  \[
  W_{p_1}(n_1, \Sigma_1) \otimes W_{p_2}(n_2, \Sigma_2) = W_{p_1 + p_2}(n_1 + n_2, \Sigma_1 \oplus \Sigma_2),
  \]
  where \( \Sigma_1 \oplus \Sigma_2 \) denotes the block-diagonal matrix:
  \[
  \begin{pmatrix}
  \Sigma_1 & 0 \\
  0 & \Sigma_2
  \end{pmatrix}.
  \]

  \item On \textbf{ Morphisms:} For morphisms
  \[
  A_1: W_{p_1}(n_1, \Sigma_1) \to W_{q_1}(n_1, A_1 \Sigma_1 A_1^T), \quad
  A_2: W_{p_2}(n_2, \Sigma_2) \to W_{q_2}(n_2, A_2 \Sigma_2 A_2^T),
  \]
  their tensor product is:
  \[
  A_1 \otimes A_2 = A_1 \oplus A_2 :
  W_{p_1 + p_2}(n_1 + n_2, \Sigma_1 \oplus \Sigma_2)
  \to
  W_{q_1 + q_2}(n_1 + n_2, (A_1 \oplus A_2)(\Sigma_1 \oplus \Sigma_2)(A_1 \oplus A_2)^T),
  \]
  where \( A_1 \oplus A_2 \) is the block-diagonal matrix formed by \( A_1 \) and \( A_2 \).
\end{itemize}

\smallskip

\paragraph{\it Monoidal Unit:}

\smallskip 

{\it The unit object } is \( W_0(0, 0) \), representing the trivial Wishart distribution in dimension 0. It acts as the identity for the tensor product:
\[
W_p(n, \Sigma) \otimes W_0(0, 0) \cong W_p(n, \Sigma).
\]

\smallskip 
\paragraph{\it Symmetry:}

\smallskip 
The symmetric structure is given by a braiding isomorphism that permutes block matrices. For 
\( W_{p_1}(n_1, \Sigma_1) \) and \( W_{p_2}(n_2, \Sigma_2) \), there is a natural isomorphism:
\[
\beta : W_{p_1 + p_2}(n_1 + n_2, \Sigma_1 \oplus \Sigma_2) \to W_{p_2 + p_1}(n_2 + n_1, \Sigma_2 \oplus \Sigma_1),
\]
corresponding to swapping the block structure of \( \Sigma_1 \oplus \Sigma_2 \).
\end{proof}

\medskip 

  \begin{rem}
This can be given a statistical interpretation. Morphisms model linear compression (or embedding of covariance structures).
Furthermore, the monoidal structure encodes independent combinations of covariance matrices (i.e. combining datasets from different experiments).
\end{rem}
\subsection{Wishart convex cones}
Wishart matrices are naturally associated with convex cones in the space of symmetric matrices. A cone in a vector space is a subset that is closed under multiplication by positive scalars. 

We call a Wishart cone the set of all possible Wishart matrices of a given fixed dimension. This forms a convex cone in the space of symmetric matrices of a fixed size.
Furthermore, this cone is a cone of symmetric positive semi-definite matrices. It is fundamental in \textit{optimization and quantum information}.

\section{Monge-Ampère Geometry on the Cone of Positive Definite Matrices}
The cones of positive definite matrices carry important properties of Monge--Amp\`ereness. We refer to \cite{B,V} for more information concerning their importance in optimal transport and related fields.  

\subsection{Differential-Geometric Structure}
\medskip 

We have the following: 
\begin{thm}\label{T:2}
{\it The cone $\mathcal{S}_p^+$ of symmetric positive definite matrices is a Monge--Amp\`ere domain.}
\end{thm}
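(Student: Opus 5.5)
To prove Theorem~\ref{T:2}, I will take ``Monge--Amp\`ere domain'' to mean an open convex domain $\Omega$ carrying a smooth, strictly convex potential $\phi$ whose Hessian metric $g=\nabla^2\phi$ makes $\Omega$ a Hessian manifold, and such that $\phi$ solves a Monge--Amp\`ere equation $\det(\nabla^2\phi)=c\,e^{k\phi}$ (equivalently $\det(\nabla^2\phi)$ is a fixed power of a characteristic or volume density). The plan is to take the log-determinant barrier $\phi(\Sigma)=-\log\det\Sigma$ on $\mathcal{S}_p^+$ and verify each item in turn.

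The first step is to note that $\mathcal{S}_p^+$ is an open convex cone in the space $\mathrm{Sym}_p$ of dimension $N=p(p+1)/2$. Openness follows from continuity of eigenvalues. Convexity follows because $x^T(t\Sigma_1+(1-t)\Sigma_2)x>0$ for every $x\neq 0$.

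Next I compute derivatives. The first derivative is $D\phi(\Sigma)[U]=-\mathrm{Tr}(\Sigma^{-1}U)$, and the second is
\[
D^2\phi(\Sigma)[U,V]=\mathrm{Tr}(\Sigma^{-1}U\Sigma^{-1}V).
\]
This is exactly the affine-invariant metric $g_\Sigma$ from the introduction. It is positive definite because $\mathrm{Tr}\bigl((\Sigma^{-1/2}U\Sigma^{-1/2})^2\bigr)>0$ for $U\neq0$. Hence $\phi$ is strictly convex and $(\mathcal{S}_p^+,D^2\phi)$ is a Hessian manifold. In addition, $\phi(\Sigma)\to+\infty$ as $\Sigma$ approaches the boundary, where $\det\to0$, so $\phi$ is a proper barrier.

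The key step, and the main obstacle, is computing $\det(\nabla^2\phi)$ with respect to a fixed Lebesgue measure $d\Sigma$ on $\mathrm{Sym}_p$. The Hessian is the linear operator $U\mapsto \Sigma^{-1}U\Sigma^{-1}$ on $\mathrm{Sym}_p$, so what is needed is the determinant of the congruence map $L_A:U\mapsto AUA^T$ restricted to symmetric matrices. The standard fact is $\det L_A=(\det A)^{p+1}$. I would prove it by diagonalizing $A$ when $A$ is symmetric: in the basis $E_{ij}+E_{ji}$ ($i\le j$) the eigenvalues of $L_A$ are $a_ia_j$, and their product is $\prod_i a_i^{p+1}$. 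Taking $A=\Sigma^{-1}$ then gives
\[
\det\nabla^2\phi(\Sigma)=(\det\Sigma)^{-(p+1)}=e^{(p+1)\phi(\Sigma)}.
\]
This is a Monge--Amp\`ere equation of Kähler--Einstein/affine-sphere type. Its Monge--Amp\`ere measure $\sqrt{\det\nabla^2\phi}\,d\Sigma$ is precisely the volume form $\omega=\det(\Sigma)^{-\frac{p+1}{2}}d\Sigma$ announced in the introduction.

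Finally, I will check $GL_p$-equivariance to show the structure is canonical. Under $\Sigma\mapsto A\Sigma A^T$ we have $\phi\mapsto\phi-2\log|\det A|$, so the Hessian is invariant, and $\omega$ is invariant because $d\Sigma$ scales by $|\det A|^{p+1}$. The gradient map $\Sigma\mapsto-\Sigma^{-1}$ is a diffeomorphism onto the dual cone $-\mathcal{S}_p^+$, which gives the Legendre-dual affine coordinates. This completes the dually flat Monge--Amp\`ere structure.
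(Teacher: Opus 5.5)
Your argument is correct for the definition you adopt, but it takes a genuinely different route from the paper. The paper's proof of Theorem~\ref{T:2} contains no argument at all: it only points to external works, where the statement is connected to Landau--Ginzburg models and semidefinite programming, and it never says what a Monge--Amp\`ere domain is. You instead make the definition explicit and verify it directly for $\phi=-\log\det$:
\begin{itemize}
\item its Hessian is the affine-invariant metric $g_\Sigma$;
\item the congruence determinant $\det L_A=(\det A)^{p+1}$ on $\mathrm{Sym}_p$ yields $\det\nabla^2\phi=e^{(p+1)\phi}$;
\item the gradient map $\Sigma\mapsto-\Sigma^{-1}$ supplies the Legendre-dual coordinates.
\end{itemize}
Your route gives a self-contained, checkable verification that produces the objects the paper uses immediately afterwards ($g_\Sigma$, $\omega$, the dually flat structure). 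It also generalizes cleanly. For any symmetric cone, the logarithm of the Koszul--Vinberg characteristic function $\chi$ satisfies $\det\nabla^2\log\chi=c\,\chi^2$, by an invariance argument under the transitive automorphism group. For $\mathcal{S}_p^+$, $\chi$ is a constant multiple of $\det(\Sigma)^{-\frac{p+1}{2}}$, which is exactly the density of $\omega$. The paper's citation route situates the result in a broader framework, but leaves the theorem's content undefined within the paper itself.

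Three points of precision.
\begin{enumerate}
\item The identity $\det\nabla^2\phi=(\det\Sigma)^{-(p+1)}$ holds in a Frobenius-orthonormal basis of $\mathrm{Sym}_p$. In the entry coordinates $\Sigma_{ij}$, $i\le j$, an extra factor $2^{p(p-1)/2}$ appears. This is harmless, since your definition allows a constant.
\item The invariance of $\omega$ uses $|\det L_A|=|\det A|^{p+1}$ for every $A\in GL_p$, whereas you proved it only for symmetric $A$. The general case follows from the polar decomposition $A=QP$, because $L_{QP}=L_QL_P$ and $L_Q$ is Frobenius-orthogonal.
\item $\sqrt{\det\nabla^2\phi}\,d\Sigma$ is the Riemannian volume of the Hessian metric, not the Monge--Amp\`ere measure. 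The Monge--Amp\`ere measure is $\det(\nabla^2\phi)\,d\Sigma=(\det\Sigma)^{-(p+1)}d\Sigma$, the pullback of Lebesgue measure by the gradient map. It is only relatively invariant under $\Sigma\mapsto A\Sigma A^T$, picking up the factor $|\det A|^{-(p+1)}$. The paper's label ``Monge--Amp\`ere volume form'' for $\omega$ is equally loose, so you are following its usage.
\end{enumerate}
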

\begin{proof}
A proof of this statement is available in \cite{C0} and written in more details in \cite{C-LG}, where it is connected to Landau--Ginzburg models.
Relations to semi definite programming are outlined in \cite{C}.
\end{proof}
\medskip 

The cone \( \mathcal{S}^+_p \) is a Hessian manifold equipped with the following geometric structures:

\begin{itemize}
    \item \textbf{affine-invariant metric:} For \( U, V \in T_\Sigma S^+_p \), the tangent space at \( \Sigma \), define
    \[
    g_\Sigma(U, V) = \mathrm{Tr}(\Sigma^{-1} U \Sigma^{-1} V).
    \]

    \item \textbf{Monge-Ampère volume form:} The volume form associated with the Hessian structure is
    \[
    \omega = \det(\Sigma)^{-\frac{p+1}{2}} \, d\Sigma,
    \]
    which is naturally tied to the Wishart density.

    \item \textbf{Monge-Ampère equation:} The Monge-Ampère operator
    \[
    \mathrm{MA}(\varphi) = \det(\nabla^2 \varphi)
    \]
where  $\varphi$ is a potential function and $\nabla^2 \varphi$ is the Hessian matrix. This governs geodesics in the optimal transport geometry of \( \mathcal{S}^+_p \).
\end{itemize}

\subsection{Compatibility with the Monoidal Category}

\subsubsection*{(a) Monoidal Product \((\otimes)\) and Product Geometry}

The block-diagonal concatenation \[ \Sigma_1 \oplus \Sigma_2 \] corresponds to a Riemannian product structure on \( \mathcal{S}^+_{p_1} \times \mathcal{S}^+_{p_2} \). The Monge-Ampère volume form behaves multiplicatively:
\[
\omega_{\Sigma_1 \oplus \Sigma_2} = \omega_{\Sigma_1} \otimes \omega_{\Sigma_2}.
\]

\textit{Interpretation:} The tensor product in the category \( \mathcal{W} \) of Wishart distributions aligns with the geometric product structure of Monge-Ampère domains.

\subsubsection*{(b) Morphisms and Affine Equivariance}

Linear transformations
\[
A : \cW_p(n, \Sigma) \to \cW_q(n, A \Sigma A^T)
\]
are affine maps on \( \cS^+_p \), preserving the invariance of the Monge-Ampère equation under affine reparameterizations. The Fisher metric transforms covariantly:
\[
g_{A \Sigma A^T}(A U A^T, A V A^T) = g_\Sigma(U, V).
\]

\subsection{Wishart Density and Monge-Ampère Soliton}

The Wishart density is a {\it Monge-Ampère soliton} relative to \( \omega \), with the exponent \( \frac{n - p - 1}{2} \) encoding compatibility with the volume form. The normalization constant involves the multivariate gamma function:
\[
\Gamma_p\left(\frac{n}{2}\right),
\]
arising from integrating the Monge-Ampère volume over \( \cS^+_p \).

\medskip 

A \emph{soliton} in this context refers to a solution of the Monge-Ampère equation that exhibits self-similarity under geometric transformations, such as affine maps. The structure of the Wishart density reflects this soliton property in the following ways:

\begin{itemize}
    \item \textbf{Invariance:} The density’s dependence on the determinant term
    \[
    |{\bf W}|^{\frac{n - p - 1}{2}}
    \]
    ensures compatibility with the Monge-Ampère volume form \( \omega \). When integrating the density over \( \cS^+_p \), the exponent cancels out the scaling behavior of \( \omega \), preserving the self-similarity characteristic of a soliton.

    \item \textbf{Optimal Transport:} The Monge-Ampère equation governs optimal transport on the cone \( \cS^+_p \). The exponential component of the Wishart density,
    \[
    e^{-\frac{1}{2} \operatorname{Tr}(\Sigma^{-1} {\bf W})},
    \]
    looks like a Boltzmann distribution corresponding to a quadratic transport cost. This reinforces the interpretation of the Wishart density as a stationary solution---or soliton---within this geometric framework.
\end{itemize}

\subsection{Optimal Transport Interpretation}

\begin{prop}\label{P:1} 
{\it The Monge-Ampère structure on \( \cS^+_p \) equips the category \( \mathcal{W} \) with a natural Wasserstein metric. Displacement interpolation between two objects:
\[
\cW_p(n, \Sigma_1) \quad \text{and} \quad \cW_p(n, \Sigma_2)
\]
is governed by the Monge-Ampère equation.

\textbf{Entropic Regularization:} The additivity property
\[
{\bf W}_1 + {\bf W}_2 \sim \cW_p(n_1 + n_2, \Sigma)
\]
mirrors the convolution of probability measures in optimal transport.}
\end{prop}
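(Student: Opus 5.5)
To make Prop.~\ref{P:1} precise, I would put a metric on the objects of $\mathcal{W}$ with fixed $p$ and $n$. Since such an object is determined by $\Sigma\in\mathcal{S}_p^+$, I take $d(\mathcal{W}_p(n,\Sigma_1),\mathcal{W}_p(n,\Sigma_2))$ to be the $2$-Wasserstein distance between the Gaussian laws $\mathcal{N}_p(0,\Sigma_1)$ and $\mathcal{N}_p(0,\Sigma_2)$ that generate the rows of $X$. The Wishart matrix is the push-forward of $n$ i.i.d.\ copies of this law under $X\mapsto XX^T$. I would argue that this distance equals $\sqrt n$ times the Bures--Wasserstein distance
\[
W_2(\Sigma_1,\Sigma_2)^2=\mathrm{Tr}\,\Sigma_1+\mathrm{Tr}\,\Sigma_2-2\,\mathrm{Tr}\bigl(\Sigma_1^{1/2}\Sigma_2\Sigma_1^{1/2}\bigr)^{1/2}.
\]
The product coupling of $n$ optimal Gaussian couplings is optimal for the product measure, which gives the factor $\sqrt n$. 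The first step is then to check the metric axioms and naturality. Identity and composition in $\mathcal{W}$ are respected because the distance is defined intrinsically on $\mathcal{S}_p^+$. For the tensor product, the Bures distance is additive in squares on block-diagonal matrices, i.e.\ $W_2(\Sigma_1\oplus\Sigma_2,\Sigma_1'\oplus\Sigma_2')^2=W_2(\Sigma_1,\Sigma_1')^2+W_2(\Sigma_2,\Sigma_2')^2$. This matches the product structure $\omega_{\Sigma_1\oplus\Sigma_2}=\omega_{\Sigma_1}\otimes\omega_{\Sigma_2}$ noted above.

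Next, displacement interpolation. By Brenier's theorem the optimal map from $\mathcal{N}(0,\Sigma_1)$ to $\mathcal{N}(0,\Sigma_2)$ is $\nabla\varphi$, where
\[
\varphi(x)=\tfrac12 x^TTx,\qquad T=\Sigma_1^{-1/2}\bigl(\Sigma_1^{1/2}\Sigma_2\Sigma_1^{1/2}\bigr)^{1/2}\Sigma_1^{-1/2}.
\]
The potential $\varphi$ solves the Monge--Amp\`ere equation
\[
\det(\nabla^2\varphi)=\frac{f_{\Sigma_1}(x)}{f_{\Sigma_2}(\nabla\varphi(x))},
\]
and here the verification reduces to $\det T=\det\Sigma_2^{1/2}/\det\Sigma_1^{1/2}$ together with $T\Sigma_1T=\Sigma_2$. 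McCann's interpolation $((1-t)\mathrm{Id}+tT)_\#\mathcal{N}(0,\Sigma_1)$ is Gaussian with covariance
\[
\Sigma_t=((1-t)I+tT)\,\Sigma_1\,((1-t)I+tT).
\]
So the geodesic stays inside the family, and $\mathcal{W}_p(n,\Sigma_t)$ is the interpolating path of objects. Each $\Sigma_t$ is the image of $\Sigma_1$ under the linear map $A_t=(1-t)I+tT$. The interpolation is therefore realised by morphisms of $\mathcal{W}$, consistent with the affine equivariance of part (b). This is the sense in which the interpolation is ``governed by the Monge--Amp\`ere equation'', using Theorem~\ref{T:2} for the Monge--Amp\`ere structure of the cone.

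For the entropic-regularization clause, I would use Lemma~\ref{L:2}: $\mathcal{W}_p(n_1,\Sigma)\star\mathcal{W}_p(n_2,\Sigma)=\mathcal{W}_p(n_1+n_2,\Sigma)$. I would compare this with the fact that the entropic OT (Schr\"odinger) problem between Gaussians is solved through Gaussian convolution kernels: the heat-kernel smoothing of a Gaussian is again Gaussian, with parameters adding. The statement I would actually prove is the semigroup identity together with the observation that it is the matrix-variate analogue of the heat semigroup property of the Gaussian entropic kernel. Since ``mirrors'' is interpretive, I would state this as a structural analogy rather than an equality.

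The main obstacle is the identification at the level of Wishart laws themselves. The Wasserstein distance between the Wishart distributions, as measures on $\mathcal{S}_p^+$, is not obviously the Bures distance, because $X\mapsto XX^T$ is not an isometry. I would therefore define the metric through the generating Gaussians, i.e.\ the pullback, and state this explicitly. I would then check only that the map $\Sigma\mapsto\mathcal{W}_p(n,\Sigma)$ is injective, so that the metric is well defined on objects.
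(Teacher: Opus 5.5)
Your proposal follows the same route as the paper's proof (the Bures--Wasserstein formula on $\cS^+_p$ plus a Brenier potential solving the Monge--Amp\`ere equation), but it makes that route explicit: the map $T$, the identities $T\Sigma_1T=\Sigma_2$ and $\det T=(\det\Sigma_2/\det\Sigma_1)^{1/2}$, and the McCann covariance $\Sigma_t$; your point that the metric must be defined by pullback from the generating Gaussians, rather than as $\mathscr{W}_2$ between the Wishart laws, also fixes a conflation the paper makes without comment. One slip: $\mathscr{W}_2(\mathcal{N}_p(0,\Sigma_1),\mathcal{N}_p(0,\Sigma_2))$ is the Bures--Wasserstein distance itself (the $\sqrt n$ belongs to the law of $X$), and you should drop the $\sqrt n$, because with it your tensor check fails: $\otimes$ replaces $n_1,n_2$ by $n_1+n_2$, giving $(n_1+n_2)(\delta_1^2+\delta_2^2)$ rather than $n_1\delta_1^2+n_2\delta_2^2$, where $\delta_i$ is the Bures distance between the $i$-th blocks.
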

\begin{proof}

The notation \( W_p(n, \Sigma) \) explicitly specifies that \( \Sigma \) is a \( p \times p \) matrix. For the Wasserstein metric to compare two distributions meaningfully, their covariance structures must reside in the same geometric space, i.e., \( \Sigma_1, \Sigma_2 \in \cS^+_p \), the cone of \( p \times p \) positive definite matrices.

Morphing \( \Sigma_1 \) to \( \Sigma_2 \) via optimal transport requires a common domain for the transport map \( \nabla \varphi \), which is only possible if both matrices have the same dimension.

The 2-Wasserstein distance \( \mathscr{W}_2 \) between two covariance matrices \( \Sigma_1 \) and \( \Sigma_2 \) is given by the affine-invariant geodesic formula:
\[
\mathscr{W}_2^2(\Sigma_1, \Sigma_2) = \mathrm{Tr} \left( \Sigma_1 + \Sigma_2 - 2 \left( \Sigma_1^{1/2} \Sigma_2 \Sigma_1^{1/2} \right)^{1/2} \right),
\]
which inherently assumes \( \Sigma_1, \Sigma_2 \in \cS^+_p \). Note that this formula fails when \( \Sigma_1 \) and \( \Sigma_2 \) have different dimensions.

Regarding the Monge-Ampère equation, the displacement interpolation (geodesic) between \( \Sigma_1 \) and \( \Sigma_2 \) solves the Monge-Ampère equation:
\[
\det(\nabla^2 \varphi) = f,
\]
where \( \varphi \) is a convex potential on \( \mathbb{R}^p \) and $f$ is a given function. 

\end{proof}
\begin{rem} Wishart distributions model the scatter matrices of multivariate data. 
Let us compare \( \cW_p(n, \Sigma_1) \) and \( \cW_q(m, \Sigma_2) \)  with \( p \neq q \). This corresponds to comparing datasets of differing intrinsic dimensions. However,  a direct geometric comparison is invalid without additional structure such as embedding or projection.

If \( \Sigma_1 \in \mathbb{R}^{p \times p} \) and \( \Sigma_2 \in \mathbb{R}^{q \times q} \) with \( p \neq q \), the Wasserstein distance between \( \mathscr{W}_p(n, \Sigma_1) \) and \( \mathscr{W}_q(n, \Sigma_2) \) is not natively defined. However the following cases hold:

\begin{itemize}
    \item \textbf{Embedding:} one can extend the smaller matrix by padding with identity blocks, e.g.,
    \[
    \Sigma_1' = \begin{bmatrix} \Sigma_1 & 0 \\ 0 & I_{q-p} \end{bmatrix}, \quad \text{if } p < q.
    \]

    \item \textbf{Projection:} one can project the higher-dimensional matrix to a lower-dimensional subspace by marginalizing out variables.

\end{itemize}
\end{rem}

\subsection{Enriching the Categorical Axioms}
\begin{prop}\label{P:2} 
{\it We have the following properties:}
\begin{enumerate}
    \item \textbf{Monge-Ampère Functoriality:} Morphisms
    \[
    A: \cW_p(n, \Sigma) \to \cW_q(n, A \Sigma A^T)
    \]
    are (up to scaling) volume-preserving with respect to the Monge-Ampère volume form \( \omega =\det(\Sigma)^{-\frac{p+1}{2}}d\Sigma \).
    \item[] 
    \item \textbf{Geometric Additivity:} The monoidal product \( \otimes \) corresponds to independent coupling in the Wasserstein space over \( \cS^+_p \).

    \item[] 

    \item \textbf{Convex Duality:} The category \( \mathcal{W} \) admits a dual structure via the Legendre transform on \( \cS^+_p \), linking Wishart laws to their entropy-regularized duals.
\end{enumerate}
\end{prop}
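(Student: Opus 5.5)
The plan is to treat the three items of Prop.~\ref{P:2} separately, each time reducing to an explicit computation on $\mathcal{S}_p^+$. Items 1 and 3 will be proved only for invertible $A$, which is the setting where they can hold.

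For \textbf{Monge--Amp\`ere functoriality}, I restrict first to invertible $A\in GL_p(\mathbb{R})$, so $q=p$. The key fact is the Jacobian of the congruence map $\phi_A:\Sigma\mapsto A\Sigma A^T$ on the space of $p\times p$ symmetric matrices, which is $|\det A|^{p+1}$. I would prove this by factoring $A$ into elementary and diagonal matrices; for a diagonal $A=\mathrm{diag}(a_i)$ the entry $\Sigma_{ij}$ is multiplied by $a_ia_j$, giving $\prod_{i\le j}a_ia_j=(\det A)^{p+1}$. Combining this with $\det(A\Sigma A^T)^{-\frac{p+1}{2}}=|\det A|^{-(p+1)}\det(\Sigma)^{-\frac{p+1}{2}}$ gives $\phi_A^*\omega=\omega$ exactly. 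For non-invertible or rectangular $A$, i.e.\ $q\neq p$, I would interpret ``up to scaling'' through a factorisation $A=U\,[D\;0]\,V$, with $U,V$ orthogonal and $D$ diagonal, obtained from the singular value decomposition. The orthogonal factors preserve $\omega$, and the factor $[D\;0]$ acts as a compression onto an $r$-dimensional block. By Schur-complement coordinates, the pushforward of $\omega_p$ under this compression is $\omega_r$ times a factor depending only on $D$. \textbf{This rectangular case is the main obstacle}: the pushforward of $\omega_p$ is an infinite measure, so the statement has to be read fibrewise, as a disintegration of $\omega_p$ along the fibres of the compression. If that fails, I would state the claim only for $GL_p$ and for orthogonal embeddings.

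For \textbf{geometric additivity}, I use the definition of $\otimes$ in Thm.~\ref{T:1} together with the product structure from \S3.2(a). Take independent $\mathbf{W}_1\sim\mathcal{W}_{p_1}(n,\Sigma_1)$ and $\mathbf{W}_2\sim\mathcal{W}_{p_2}(n,\Sigma_2)$. Then $\mathbf{W}_1\oplus\mathbf{W}_2$ has product law $\mu_1\otimes\mu_2$, which is the independent coupling. I would also check that the Gaussian $\mathscr{W}_2$ formula splits over block-diagonal matrices: the square root of a block-diagonal matrix is computed blockwise, so
\[\mathscr{W}_2^2(\Sigma_1\oplus\Sigma_2,\Sigma_1'\oplus\Sigma_2')=\mathscr{W}_2^2(\Sigma_1,\Sigma_1')+\mathscr{W}_2^2(\Sigma_2,\Sigma_2').\]
The same blockwise argument shows that the affine-invariant metric satisfies $g_{\Sigma_1\oplus\Sigma_2}=g_{\Sigma_1}\oplus g_{\Sigma_2}$ on block-diagonal tangent vectors. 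This identifies $\otimes$ with the product coupling in Wasserstein space.

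For \textbf{convex duality}, I take the potential $\varphi(\Sigma)=-\log\det\Sigma$. Its gradient is $-\Sigma^{-1}$ and its Hessian is the map $U\mapsto\Sigma^{-1}U\Sigma^{-1}$, which recovers $g_\Sigma$. The Legendre transform is $\varphi^*(\Theta)=-\log\det(-\Theta)-p$ on $-\mathcal{S}_p^+$. Writing the Wishart family in exponential form with natural parameter $\Theta=-\tfrac12\Sigma^{-1}$, its log-partition function is $-\tfrac n2\log\det(-\Theta)$ up to constants. Hence the dual coordinates are the mean $n\Sigma$, and the dual potential is the negative entropy up to constants. I would then verify that $\Sigma\mapsto\Sigma^{-1}$ is natural with respect to morphisms, sending $A$ to $A^{-T}$ for invertible $A$. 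I would also verify that it is compatible with $\oplus$, which holds because $(\Sigma_1\oplus\Sigma_2)^{-1}=\Sigma_1^{-1}\oplus\Sigma_2^{-1}$. Together these give a contravariant involution on the invertible part of $\mathcal{W}$.
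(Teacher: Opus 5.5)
Your proposal is sound wherever the statement can be made precise, modulo the points in the second paragraph. Where the paper mostly asserts, you compute explicitly.

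\textbf{Item 1.} The paper writes $A_*\omega=\det(AA^T)^{-\frac{q+1}{2}}\det(\Sigma)^{\frac{q+1}{2}-\frac{p+1}{2}}\,d(A\Sigma A^T)$ and reads off a factor $\det(A)^{-(p+1)}$ for $A\in\mathrm{GL}(p)$. Your Jacobian $|\det A|^{p+1}$ for $\Sigma\mapsto A\Sigma A^T$ cancels that determinant factor exactly, so $\phi_A^*\omega=\omega$. This is the correct and sharper form of ``volume-preserving up to scaling''; already for $p=1$, $d\sigma/\sigma$ is dilation invariant. Restricting to $\mathrm{GL}_p$ loses nothing relative to the paper. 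Its general-$A$ formula carries $\det(\Sigma)^{\frac{q-p}{2}}$, which for $q<p$ is not a function of $A\Sigma A^T$, so it does not define a measure on the target.

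\textbf{Item 2.} The paper quotes $\mathscr{W}_2^2(\mu_1\otimes\mu_2,\nu_1\otimes\nu_2)=\mathscr{W}_2^2(\mu_1,\nu_1)+\mathscr{W}_2^2(\mu_2,\nu_2)$ for arbitrary product measures, which is more general. Your blockwise square-root argument instead verifies the Gaussian (Bures--Wasserstein) case directly and adds the splitting of $g$.

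\textbf{Item 3.} The paper only defines the Legendre--Fenchel transform and asserts an entropy dual. Its log-partition $\log\Gamma_p(\tfrac n2)-\tfrac n2\log|\Sigma|$ also has the wrong sign: it is $+\tfrac n2\log|\Sigma|+\mathrm{const}$ that is convex in $\Sigma^{-1}$. Your exponential-family computation ($\Theta=-\tfrac12\Sigma^{-1}$, $\varphi^*(\Theta)=-\log\det(-\Theta)-p$, mean $n\Sigma$), together with an inversion functor compatible with $\oplus$, actually exhibits a dual structure on the invertible part of $\mathcal{W}$.

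\textbf{Points to tighten.} Your fibrewise reading of the rectangular case does work for surjective $A$. Write $A=[I_q\;0]\,g$ with $g\in\mathrm{GL}_p$, and use coordinates $\Sigma_{11}$, $C=\Sigma_{21}\Sigma_{11}^{-1/2}$, $\Sigma_{22\cdot1}=\Sigma_{22}-\Sigma_{21}\Sigma_{11}^{-1}\Sigma_{12}$. Then $\omega_p=\omega_q(\Sigma_{11})\otimes\det(\Sigma_{22\cdot1})^{-\frac{p+1}{2}}\,dC\,d\Sigma_{22\cdot1}$, with no residual factor depending on $D$. Drop the ``orthogonal embeddings'' fallback, though: for $q>p$ the matrix $A\Sigma A^T$ is singular, so it lies outside $\mathcal{S}_q^+$, where $\omega_q$ lives, and is not an object of $\mathcal{W}$.

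In item 2, the tensor object $\mathcal{W}_{p_1+p_2}(n_1+n_2,\Sigma_1\oplus\Sigma_2)$ is not the law of $\mathbf{W}_1\oplus\mathbf{W}_2$. Its off-diagonal block is almost surely nonzero, and its diagonal blocks carry $n_1+n_2$ degrees of freedom. The identification with the product coupling therefore holds only for the generating Gaussians, $\mathcal{N}(0,\Sigma_1\oplus\Sigma_2)=\mathcal{N}(0,\Sigma_1)\otimes\mathcal{N}(0,\Sigma_2)$, which is also the level at which the paper argues.

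In item 3, $A\mapsto A^{-T}$ is covariant, since $(BA)^{-T}=B^{-T}A^{-T}$; the contravariant version is $A\mapsto A^T$. Also, the dual potential $-\tfrac n2\log|\Sigma|+\mathrm{const}$ is the negative entropy only relative to the carrier measure $|\mathbf{W}|^{\frac{n-p-1}{2}}d\mathbf{W}$. The Lebesgue entropy is $\tfrac{p+1}{2}\log|\Sigma|+\mathrm{const}$, so ``up to constants'' fails with the Lebesgue entropy unless $n=p+1$.
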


\begin{proof}
1. {\it Monge-Ampère Functoriality.} Linear morphisms \( A: W_p(n, \Sigma) \to W_q(n, A \Sigma A^T) \) are volume-preserving up to scaling with respect to the Monge-Ampère volume form:
\[
\omega = \det(\Sigma)^{-\frac{p+1}{2}} \, d\Sigma.
\]
\begin{itemize}
\item Under a linear map \( A \in \mathbb{R}^{q \times p} \), the scale matrix transforms as \( \Sigma \mapsto A \Sigma A^T \). The volume form \( \omega \) transforms according to the Jacobian determinant of this map. For \( \Sigma \in \cS^+_p \), the pushforward measure satisfies:
    \[
    A_* \omega = \det(AA^T)^{-\frac{q+1}{2}} \cdot \det(\Sigma)^{\frac{q+1}{2} - \frac{p+1}{2}} \, d(A \Sigma A^T).
    \]

\item  If \( q = p \), and \( A \in \mathrm{GL}(p) \), then  
\( \det(AA^T) = \det(A)^2 \), so:
    \[
    A_* \omega = \det(A)^{-(p+1)} \omega,
    \]
    showing that the morphism scales the volume form by \( \det(A)^{-(p+1)} \).

 \item For general $A$, the scaling reflects the relative change in dimension. 
    The map \( A \) preserves the projective class of \( \omega \), aligning with the exponent \( \frac{n - p - 1}{2} \) in the Wishart probability density function, which adjusts for scaling under transformation.

\end{itemize}
\smallskip 

 {\it 2. Geometric Additivity.} The tensor product \( \otimes \) (i.e., block-diagonal concatenation of covariance matrices) corresponds to independent coupling in Wasserstein space over \( \cS^+_p \).

\begin{itemize}
    \item Given Wishart laws \( \cW_{p_1}(n_1, \Sigma_1) \) and \( \cW_{p_2}(n_2, \Sigma_2) \), their tensor product is:
    \[
    \cW_{p_1 + p_2}(n_1 + n_2, \Sigma_1 \oplus \Sigma_2),
    \]
    where
    \[
    \Sigma_1 \oplus \Sigma_2 =
    \begin{pmatrix}
        \Sigma_1 & 0 \\
        0 & \Sigma_2
    \end{pmatrix}.
    \]

    \item \textbf{Wasserstein Product Metric:} \\
    For independent measures \( \mu_i, \nu_i \), the Wasserstein distance satisfies:
    \[
   \mathscr{W}_2^2(\mu_1 \otimes \mu_2, \nu_1 \otimes \nu_2) =    \mathscr{W}_2^2(\mu_1, \nu_1) +    \mathscr{W}_2^2(\mu_2, \nu_2).
    \]
    This reflects the statistical independence encoded in \( \Sigma_1 \oplus \Sigma_2 \).

    \item \textbf{Optimal Transport Interpretation:} \\
    The geodesic between \( \Sigma_1 \oplus \Sigma_2 \) and \( \widetilde{\Sigma}_1 \oplus \widetilde{\Sigma}_2 \) in \( \cS^+_{p_1 + p_2} \) decomposes as:
    \[
    \text{Geodesic}_{\cS^+_{p_1}}(\Sigma_1, \widetilde{\Sigma}_1) \quad \text{and} \quad \text{Geodesic}_{\cS^+_{p_2}}(\Sigma_2, \widetilde{\Sigma}_2),
    \]
    revealing the tensor product's alignment with separable transport.
\end{itemize}

\medskip
 
 {\it 3. Convex Duality.} The category \( \mathcal{W} \) of Wishart laws admits a dual structure via the Legendre transform on \( S^+_p \), linking to entropy-regularized models.

\begin{itemize}
    \item \textbf{Legendre-Fenchel Duality:} \\
    For a convex function \( \varphi : S^+_p \to \mathbb{R} \), its Legendre transform is:
    \[
    \varphi^*(M) = \sup_{\Sigma \in S^+_p} \left\{ \operatorname{Tr}(M\Sigma) - \varphi(\Sigma) \right\}.
    \]

    \item \textbf{Wishart Partition Function:} \\
    The log-partition function of the Wishart distribution:
    \[
    \log \Gamma_p\left(\frac{n}{2}\right) - \frac{n}{2} \log |\Sigma|
    \]
    is convex in \( \Sigma^{-1} \) and admits a dual interpretation as an entropy function.

    \item \textbf{Entropy Regularization:} \\
    Minimizing expected negative log-likelihood:
    \[
    \mathbb{E}[-\log f(W)] = \operatorname{Tr}(\Sigma^{-1} \mathbb{E}[W]) - \frac{n - p - 1}{2} \log |\Sigma| + \text{const},
    \]
    connects the Wishart law to entropy-regularized optimal transport, mirroring Kullback-Leibler divergence minimization.

    \item \textbf{Transport Duality:} \\
    The Wasserstein geodesic between \( \Sigma_1 \) and \( \Sigma_2 \) is dual to a displacement-convex entropy functional governed by the Monge-Ampère equation. \end{itemize}

\end{proof}

Our last statement bridges therefore:
    \begin{enumerate}
        \item \emph{Wishart geometry (covariance interpolation),}
        \item \emph{Entropy regularization (statistical efficiency),}
        \item \emph{Convex analysis (Legendre duality).}
    \end{enumerate}

\subsection*{Conclusion}

The symmetric monoidal category \( \mathcal{W} \) of Wishart distributions is enriched by the Monge-Ampère geometry of \( \cS^+_p \), revealing:
\begin{itemize}
    \item A deep interplay between algebraic operations (additivity, linear morphisms) and geometric structures (optimal transport, Hessian metrics).
    \item A statistical interpretation of the Monge-Ampère equation as governing the ``flow'' of covariance matrices under linear transformations and additive noise.
    \item A bridge between probabilistic properties (e.g., Wishart normalization) and geometric invariants (e.g., volume forms).
\end{itemize}

This unified perspective enables new tools from optimal transport and Hessian geometry to analyze Wishart matrices, with applications in information geometry, high-dimensional statistics, and random matrix theory.

\section{Applications to Quantum Computing and Quantum Information}

Quantum systems interact with their environment, leading to decoherence. Random matrix theory, particularly Wishart ensembles, provides statistical models to study how noise affects quantum states \cite{B1,B2,Z}. The spectral properties of Wishart matrices allow for an analysis of how errors propagate in quantum systems \cite{B3}.
They help in designing quantum error correction (QEC) codes by modeling perturbations in quantum states and providing insights into the geometry of stabilizer codes. 

\smallskip 

The categorical and Monge-Ampère geometric structures of Wishart matrices suggest novel frameworks for quantum error correction by bridging matrix-valued statistics, optimal transport, and quantum information theory.  In particular, in QEC, spatially or temporally correlated errors (such as in bosonic or spin systems) could be modeled using Wishart-distributed noise, where $\Sigma$ encodes error correlations. Linear maps model noise propagation through quantum circuits, preserving algebraic structure under transformations. The Wasserstein metric $\mathscr{W}_2$ between Wishart laws quantifies the minimal ``cost'' to transform one error covariance $\Sigma_1$ into another $\Sigma_2$. This could optimize error mitigation strategies by minimizing the resource cost of correcting correlated errors.

\smallskip 

Equilibrium solutions to the Monge-Ampère equation describe noise configurations that are stable under transport. These can represent noise thresholds or critical points in fault-tolerant QEC protocols.

Finally, the monoidal product $\otimes$ (direct sum $\Sigma_1\oplus \Sigma_2$) formalizes the combination of independent error processes across qubits. This aligns with concatenated codes or surface codes, where errors on disjoint regions are treated independently.

\smallskip 

Therefore, the categorical and Monge-Ampère structures of Wishart matrices offer:

\begin{itemize}
\item Noise Modeling: Representing correlated errors as covariance structures.
\item Optimal Correction: Using Wasserstein geodesics to minimize correction costs.
\item Code Design: Leveraging convex duality and tensor products for fault tolerance.
\end{itemize}

\section*{Acknowledgments}
This research is part of the project No. 2022/47/P/ST1/01177 cofunded by the National Science Centre  and the European Union's Horizon 2020 research and innovation program, under the Marie Sklodowska Curie grant agreement No. 945339 \includegraphics[width=1cm, height=0.5cm]{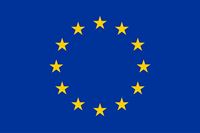}. For the
purpose of Open Access, the author has applied a CC-BY public copyright license
to any Author Accepted Manuscript (AAM) version arising from this submission.

\end{document}